\newcommand{\Sup}{\langle \sup\rangle }
\newcommand{\Min}{\langle \mathrm{mub}\rangle }
\newcommand{\Past}{\langle \textnormal{\textsc{P}}\rangle }
\begin{document}
\title{The Modal Logic of Minimal Upper Bounds}

\author{Søren Brinck Knudstorp\orcidID{0009-0008-9835-4195}
}
\authorrunning{S. B. Knudstorp}
\institute{ILLC \& Philosophy, University of Amsterdam, Amsterdam, the Netherlands \\ \footnotesize{Accepted for publication in the postproceedings of TbiLLC 2023.}\vspace{-.4cm}}
\maketitle              
\begin{abstract}

To formalize patterns of information increase and decrease, Van Benthem (1996) proposed modal information logic ($\mathbf{MIL}$), a modal logic over partial orders. In $\mathbf{MIL}$, points are interpreted as information states and least upper bounds, when existent, as informational sums. A natural counterpart to this logic is the modal logic of minimal upper bounds ($\mathbf{MIN}$), interpreting \textit{minimal}, rather than \textit{least}, upper bounds as informational sums. 

This paper presents the logic $\mathbf{MIN}$, and in the main result, it is shown that the modal language cannot distinguish the two interpretations: a formula is valid in $\mathbf{MIN}$ if and only if it is valid in $\mathbf{MIL}$.
%the resulting set of validities and consequences coincide with those of $\mathbf{MIL}$. 
Leveraging the work of \cite{SBKMIL}, as corollaries, an axiomatization of $\mathbf{MIN}$ and a proof of decidability are obtained.

\keywords{Modal logic  \and Modal information logic \and Axiomatization \and Decidability \and Minimal upper bound \and Least upper bound}
\end{abstract}
%
%
%
%\newpage 
\section{Introduction}
A nuanced definitional distinction is that between minimal and least upper bounds. Given a partial order $(P, \leq)$, an element $x\in P$ is a minimal upper bound of a subset $Y\subseteq P$ if it is both an upper bound and minimal w.r.t. this property, i.e., no other upper bound is below $x$; it is a least upper bound (supremum) if it is both an upper bound and least w.r.t. this property, i.e., all other upper bounds are above $x$. Despite sounding synonymous, the conditions for minimal and least upper bounds diverge, as exemplified in Figure \ref{fig:M1}.

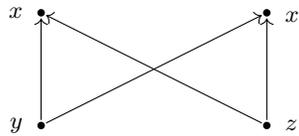
\begin{figure}[h!tbp]
\begin{center}
\begin{tikzpicture}[scale=1.5, label distance=2pt, shorten >=.5pt, shorten <=.5pt]

   \node [fill, circle, inner sep=1pt, label=left:$y$, label=right:{}] (worldu) at (-1,1) {};
   \node [fill, circle, inner sep=1pt, label=right:$x'$, label=left:{}] (worldw) at (1,2) {};
   \node [fill, circle, inner sep=1pt, label=left:$x$, label=right:{}] (worldm) at (-1,2) {};
   \node [fill, circle, inner sep=1pt, label=right:$z$, label=left:{}] (worldv) at (1,1) {};
   
  \draw [->] (worldu) to (worldw);
    \draw [->] (worldu) to (worldm);
    \draw [->] (worldv) to (worldw);
  \draw [->] (worldv) to (worldm);
\end{tikzpicture}
\end{center} \vspace{-.4cm}
\caption{$x$ is an upper bound of $\{y,z\}$ and minimal in this regard, as no other upper bound is below $x$. However, $x$ is not least in this regard, as $x'$ is another upper bound that is not above $x$. Thus, $\{y,z\}$ has minimal upper bounds, namely $x$ and $x'$, but no least upper bound. This example also illustrates that multiple minimal upper bounds may exist, unlike least upper bounds which, when existent, are unique due to antisymmetry. 
% \\
% $x$ is a minimal upper bound of $\{y,z\}$, as it is an upper bound and no other upper bound is below $x$. However, $x$ is not a least upper bound of $\{y,z\}$, as $x'$ is an upper bound that is not above $x$.
} \label{fig:M1}
\end{figure}

\cite{JvB1996} introduced a modal logic, termed Modal Information Logic ($\mathbf{MIL}$), on partial orders $(P, \leq)$ featuring a binary modality `$\Sup$' with semantics
\begin{align*}
    x\Vdash \Sup\varphi \psi\qquad \textbf{iff}\qquad \text{$\exists y,z\in P$ such that $y\Vdash \varphi$; $z\Vdash \psi$; and $x=\sup\{y,z\}$},
\end{align*}
where $x=\sup\{y,z\}$ indicates that $x$ is the least upper bound, the supremum, of $\{y,z\}$. 

Inspired by this logic, we explore a natural analogue where the semantics of the modality refers to the induced minimal-upper-bound relation rather than the least-upper-bound relation. We denote this new logic of minimal upper bounds by $\mathbf{MIN}$. While $\mathbf{MIN}$ retains the same modality symbol, we will typically designate it as `$\Min$' to emphasize the distinct semantic clause:
\begin{align*}
    x\Vdash \Min\varphi \psi\qquad \textbf{iff}\qquad \text{$\exists y,z\in P$ such that $y\Vdash \varphi$; $z\Vdash \psi$; and $x\in\mathrm{mub}\{y,z\}$},
\end{align*}
where $\mathrm{mub}\{y,z\}$ denotes the set of minimal upper bounds of $\{y,z\}$. 

This paper's inquiry centers on understanding $\mathbf{MIN}$ and especially its relationship with $\mathbf{MIL}$. Specifically, we're intrigued by the problem of axiomatization: Can we identify modal principles that are valid under one interpretation but not under the other?\footnote{Note that in Figure \ref{fig:M1}, if $y\Vdash\varphi$ and $z\Vdash\psi$, it follows that $x\Vdash \Min \varphi \psi$, but not that $x\Vdash \Sup \varphi \psi$. So clearly, there are differences at the model level, but what, if any, differences are there at the level of validities and consequences?}

In addition to this mathematically motivated exploration of a modal perspective on minimal versus least upper bounds, there is a philosophical impetus for studying $\mathbf{MIN}$. The name `modal \textit{information} logic' is not arbitrary. Conceiving the points in the partial order $(P, \leq)$ as information states, the order as an arrangement of, or inclusion of, information, and the modality as describing `merge' or `fusion' of information, $\mathbf{MIL}$ serves as a qualitative yet rigorous theory of information capturing patterns of information aggregation. $\mathbf{MIN}$ shares this informational interpretation. However, unlike $\mathbf{MIL}$, which interprets the modality in terms of suprema, entailing uniqueness of fusions, $\mathbf{MIN}$ formalizes settings where states can have multiple, incomparable fusions. Consequently, $\mathbf{MIN}$ is aptly described as the (modal information) logic of incomparable fusions.

The main contribution of this paper is to show that, perhaps unexpectedly, these two logical systems coincide: at the level of validities and consequences, there are no modal principles telling the interpretations apart. The inclusion $\mathbf{MIL}\subseteq \mathbf{MIN}$ follows easily, as the axiomatization of $\mathbf{MIL}$ provided in \cite{SBKMIL} can be seen sound w.r.t. $\mathbf{MIN}$. The converse inclusion $\mathbf{MIN}\subseteq \mathbf{MIL}$ proves to be more tricky. It is established through a representation construction that, at its core, exploits that an upper-bounded set may lack a least upper bound not only due to the existence of multiple minimal upper bounds, as was the case in Figure \ref{fig:M1}, but also by the presence of an infinite descending chain of upper bounds. As a consequence of this result, we additionally obtain an axiomatization of $\mathbf{MIN}$, as well as a proof of decidability, leveraging that \cite{SBKMIL} axiomatized $\mathbf{MIL}$ and proved it to be decidable.

Finally, we explore some generalizations, notably showing that even upon augmenting the logics with $\Min$ and $\Sup$'s respective residual implications, their consequence relations remain equal.
\\\\
The paper is organized as follows: 
\begin{itemize}
    \item Section \ref{sec:defining} provides preliminaries and clarifies connections with relevant logical systems.
    \item Section \ref{sec:minComp} constitutes the core of the paper, presenting the principal lemma that allows us to deduce that $\mathbf{MIN}$ and $\mathbf{MIL}$ coincide.
    \item Section \ref{sec:Extensions} concludes the paper with further results, including analogous results for the logics enriched with the residual implication.
\end{itemize}

\section{Definitions and Supporting Results}\label{sec:defining}
In this section, we lay the groundwork by presenting preliminary concepts, key definitions, and supporting results. We also contextualize $\mathbf{MIN}$ within the broader landscape of logical systems.

We start by providing some pertinent definitions.

\begin{definition}[Language]
    The \textnormal{formulas} $\varphi\in \mathcal{L}$ of our language $\mathcal{L}$ are given by the BNF-grammar:
        $$\varphi\mathrel{::=} \bot\hspace{0.1cm}|\hspace{0.1cm}p \hspace{0.1cm}| \hspace{0.1cm} \neg\varphi\hspace{0.1cm}|\hspace{0.1cm}\varphi\lor\varphi\hspace{0.1cm}|\hspace{0.1cm}\Min\varphi\varphi,$$
    where $p\in \mathbf{Prop}$ is a propositional letter, `$\Min$' a binary modality symbol, and $\bot$ the falsum constant.
\end{definition}

\begin{definition}[Frames and models]
    A \textnormal{{poset (frame)}} is a pair $\mathbb{F}=(P,\leq)$, where $P$ is a set and $\leq$ is a partial order on $P$ (i.e., reflexive, transitive and antisymmetric).

    A \textnormal{{poset model}} is a triple $\mathbb{M}=(P,\leq, V)$, where $(P,\leq)$ is a poset (frame), and $V:\mathbf{Prop}\to \mathcal{P}(P)$ is a \textnormal{valuation} on $P$.
\end{definition}
To ensure clarity, before providing the semantics, let's lay down the distinction between \textit{a minimal} and \textit{the least} upper bound:
\begin{definition}[Least and minimal upper bound]
    Let $(P, \leq)$ be a partial order  and $s,t,u\in P$. We say that $s$ is \textnormal{a minimal upper bound} of $\{t,u\}$ and write $s\in \mathrm{mub}\{t,u\}$ if 
    \begin{itemize}
        \item $s$ is an upper bound of $\{t,u\}$, i.e., $t\leq s$ and $u\leq s$; and
        \item $s$ is minimal with this property, i.e., if $x$ is an upper bound of $\{t,u\}$, then $x\not < s$.% or $s\leq x$, for all upper bounds $x$ of $\{t,t'\}$.
    \end{itemize}
    We say that $s$ is \textnormal{the least upper bound} of $\{t,u\}$ and write $s= \sup\{t,u\}$ if 
    \begin{itemize}
        \item $s$ is an upper bound of $\{t,u\}$; and
        \item $s$ is least with this property, i.e., if $x$ is an upper bound of $\{t,u\}$, then $s\leq x$.% or $s\leq x$, for all upper bounds $x$ of $\{t,t'\}$.
    \end{itemize}
    %Note that if $\leq$ is a partial order (i.e., an anti-symmetric preorder), then $s$ is a quasi-minimal upper bound iff it is a minimal upper bound in the usual sense.%\marginnote{Although `minimum' and `minima' might bring about associations of infima, these are treacherous associations: as defined, minima echo suprema and not their duals.}
    %In an analogous manner, the notion of quasi-suprema generalizes the standard notion of suprema from posets to preorders.
\end{definition}
\begin{definition}[Semantics]
    Let $\mathbb{M}=(P, \leq, V)$ be a poset model and $s\in P$. \textnormal{Satisfaction} of a formula $\varphi\in \mathcal{L}$ at $s$ in $\mathbb{M}$ (denoted $\mathbb{M}, s\Vdash \varphi$) is defined as follows:
    \begin{align*}
        &\mathbb{M}, s\nVdash \bot, && \textit{ } && \textit{ }\\
        &\mathbb{M}, s\Vdash p && \textbf{iff} && s\in V(p),\\
        &\mathbb{M}, s\Vdash \neg\varphi && \textbf{iff} && \mathbb{M}, s\nVdash \varphi,\\
        &\mathbb{M}, s\Vdash \varphi\lor\psi && \textbf{iff} && \mathbb{M}, s\Vdash \varphi \textit{\hspace{0.1cm} or \hspace{0.1cm}} \mathbb{M}, s\Vdash \psi,\\
        &\mathbb{M}, s\Vdash \Min\varphi\psi && \textbf{iff} && \text{there exist $t,u\in P$ such that } \mathbb{M}, t\Vdash \varphi,\textit{ } \mathbb{M}, u\Vdash \psi, \\
        & && && \hspace{4.5cm}\text{and $s\in \mathrm{mub}\{t,u\}$.}
    \end{align*}
\textit{Validity} of a formula $\varphi\in \mathcal{L}$ in a frame $\mathbb{F}$ (denoted $\mathbb{F}\Vdash \varphi$) is defined as usual.
\end{definition}

With these definitions in place, we get two logics, depending on the semantics of the modality:
\begin{definition}[Logics]
    \textnormal{The modal logic of minimal upper bounds} is denoted \upshape{$\mathbf{MIN}$} \itshape and defined as the set of validities (or the consequence relation) over the class of poset frames in the given language with the provided semantics.
    
    Likewise, $\mathbf{MIL}$ denotes the standard modal information logic of suprema on posets, but where the modality symbol---although the same---is expressed as \upshape `$\langle\sup\rangle$' \itshape for ease of understanding and interpreted with respect to the induced supremum relation instead; that is, 
    \begin{align*}
        &\mathbb{M}, s\Vdash \Sup\varphi\psi && \textbf{iff} && \text{there exist $t,u\in P$ such that } \mathbb{M}, t\Vdash \varphi,\textit{ } \mathbb{M}, u\Vdash \psi, \\
        & && && \hspace{4.5cm}\text{and $s= \sup\{t,u\}$.}
    \end{align*}
        %$$\mathbf{MIN}\mathrel{:=}\{\varphi\in \mathcal{L}\mid(P, \leq)\Vdash \varphi,\textit{ for all poset frames $(P, \leq)$}\}.$$
\end{definition}

\begin{definition}[Satisfaction notation]
    To emphasize the semantic clause for the modality applied, we may subscript $\Vdash$ with $_M$ or $_S$. $\mathbb{M}, s\Vdash_M \varphi$ indicates satisfaction under the m.u.b.  clause, while $\mathbb{M}, s\Vdash_S \varphi$ denotes satisfaction under the sup clause.
\end{definition}

\begin{remark}[Extension of $\mathbf{S4}$]
    It is not too hard to verify that the `past-looking' unary modality `$\Past$' with semantics
        $$\mathbb{M}, s\Vdash \Past\varphi\qquad \textbf{iff}\qquad \text{there exists $t\leq s$ such that $\mathbb{M}, t\Vdash \varphi$,}$$
    is definable as
        $$\Past\varphi\mathrel{:=}\Min \varphi \top.$$
    Thus, being defined over posets, $\mathbf{MIN}$ emerges as an extension of $\mathbf{S4}$, enriching it with additional expressive power. Likewise for $\mathbf{MIL}$, the past-looking modality is definable as $\Past\varphi\mathrel{:=}\Sup \varphi \top.$

    As $\mathbf{S4}$ is the logic of general preorders as well, this connection prompts exploring analogues of $\mathbf{MIN}$ and $\mathbf{MIL}$ defined over preorders rather than partial orders. While we defer this exploration for now, it will be revisited in Section \ref{sec:Extensions}.
\end{remark}

\begin{remark}[Belated logical systems]
    The idea of semantic `fusion clauses' isn't exclusive to $\mathbf{MIN}$ and $\mathbf{MIL}$. Similar clauses are found in various other logical systems, such as conjunction in truthmaker semantics  (\cite{Fraassen69}, \cite{Fine17}); intensional conjunction in relevant logics (\cite{AndersonBelnap75}, \cite{Urquhart72}); tensor or split disjunction in team semantical settings (\cite{Yang16}, \cite{Aloni22}); and the product connective in the Lambek calculus (\cite{Lambek58}, \cite{Buszkowski21}).

    For further study of the connections with truthmaker semantics and the Lambek calculus, we direct the reader to \cite{JvB2019} \& \cite{SBKTML} and \cite{SBKMIL}, respectively.
\end{remark}
Having put forth the logics and situated them within a broader logical context, let's continue explaining the results achieved. The principal proof of the present research shows that 
    $$\mathbf{MIN}\subseteq \mathbf{MIL}$$ 
through representation. Essentially, this achieves three results in one go, namely: 
\begin{enumerate}
    \item
$\mathbf{MIN}=\mathbf{MIL}$; 
    \item an axiomatization of $\mathbf{MIN}$; and 
    \item a proof that $\mathbf{MIN}$ is decidable.
\end{enumerate}
These results follow because (i) \cite{SBKMIL} axiomatizes $\mathbf{MIL}$ and proves it decidable, and (ii) the converse inclusion $\mathbf{MIL}\subseteq \mathbf{MIN}$ can be seen to hold as well, as we will now demonstrate by proving $\mathbf{MIL}$'s axiomatization sound with respect to $\mathbf{MIN}$.

\begin{theorem}[Axiomatization of $\mathbf{MIL}$ \cite{SBKMIL}]\label{MILAx}
    $\mathbf{MIL}$ is sound and complete w.r.t. the least normal modal logic with axioms:
    \begin{enumerate}[leftmargin=40pt]
        \item [\textnormal{(Re.)}] $p\land q\to \Sup pq$
        \item [\textnormal{(4)}] $\Past \Past p\to \Past p$
        \item [\textnormal{(Co.)}] $\Sup pq\rightarrow \Sup qp$
        \item [\textnormal{(Dk.)}] $(p\land \Sup qr)\to \Sup pq$
    \end{enumerate}
\end{theorem}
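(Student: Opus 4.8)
The plan is to split Theorem~\ref{MILAx} into its soundness and completeness halves. Completeness is the substantive content of \cite{SBKMIL}, and I would cite it rather than reprove it; for orientation it is obtained there by a representation argument that turns a consistent set of formulas into a point of a poset model, the delicate part being to realise every $\Sup$-requirement as a genuine \emph{least} upper bound while keeping $\le$ antisymmetric. So the part I would actually write out is soundness — and I would write it so that it visibly survives replacing the $\sup$-clause by the $\mathrm{mub}$-clause, because the identical computation then delivers the inclusion $\mathbf{MIL}\subseteq\mathbf{MIN}$ that the surrounding discussion needs.

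First, the normal-modal-logic scaffolding. In either semantics the truth condition for $\Sup\varphi\psi$ (resp.\ $\Min\varphi\psi$) has the shape ``there are $t,u$ with $t\Vdash\varphi$, $u\Vdash\psi$ and $Rstu$'', where $R$ is the fixed ternary frame relation $Rstu\iff s=\sup\{t,u\}$ (resp.\ $s\in\mathrm{mub}\{t,u\}$). This is exactly the Kripke semantics of a binary diamond over a ternary frame, so the minimal normal logic for such a modality — the $\mathrm{K}$-schema for its dual boxes, necessitation, and monotonicity/replacement of equivalents in each argument — is automatically valid on every poset, under both readings.

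Then I would dispatch the four axioms one by one, each collapsing to an elementary fact about $(P,\le)$. (Re.): reflexivity plus antisymmetry make $s$ simultaneously the least, and a minimal, upper bound of $\{s,s\}$, so $s\Vdash p\land q$ forces $s\Vdash\Sup pq$ (resp.\ $\Min pq$). (Co.): $\{t,u\}$ is unordered. (4): first note that with $\Past\varphi:=\Sup\varphi\top$ (resp.\ $\Min\varphi\top$) one has $s\Vdash\Past\varphi$ iff there is $t\le s$ with $t\Vdash\varphi$ — indeed $s=\sup\{t,u\}$ (resp.\ $s\in\mathrm{mub}\{t,u\}$) forces $t\le s$, and conversely $t\le s$ gives $s=\sup\{s,t\}$ (resp.\ $s\in\mathrm{mub}\{s,t\}$); hence $\Past$ is literally the backward $\le$-diamond and (4) is just transitivity of $\le$. (Dk.): if $s\Vdash p$ and there are $t\Vdash q$, $u\Vdash r$ with $s=\sup\{t,u\}$ (resp.\ $s\in\mathrm{mub}\{t,u\}$), then $t\le s$, whence $s=\sup\{s,t\}$ (resp.\ $s\in\mathrm{mub}\{s,t\}$), and the pair $(s,t)$ witnesses $\Sup pq$ (resp.\ $\Min pq$).

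As for the main obstacle: on the soundness side there really is none — (Dk.) is the only axiom needing a full sentence rather than a clause, and even there the required witness is handed to us. The genuine work hiding behind Theorem~\ref{MILAx} is its completeness half, imported from \cite{SBKMIL}; and the genuine difficulty of the present paper is the \emph{converse} inclusion $\mathbf{MIN}\subseteq\mathbf{MIL}$, flagged in the introduction, where the $\mathrm{mub}$/$\sup$ mismatch can only be absorbed by permitting infinite descending chains of upper bounds.
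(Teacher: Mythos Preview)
Your proposal is correct. The paper itself gives no proof of Theorem~\ref{MILAx}: it is stated purely as a citation of \cite{SBKMIL}, so your plan to import completeness from there and spell out soundness is exactly right, and your axiom-by-axiom verification is accurate. Your deliberate choice to phrase each soundness check so that it goes through verbatim under the $\mathrm{mub}$-reading additionally absorbs the paper's separate Theorem~\ref{Soundness} ($\mathbf{MIL}\subseteq\mathbf{MIN}$), which the paper itself dismisses as a ``routine check''.
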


\begin{theorem}\label{Soundness}
    $\mathbf{MIL}\subseteq \mathbf{MIN}$    
\end{theorem}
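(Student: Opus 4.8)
The plan is to read Theorem~\ref{MILAx} as a Hilbert-style calculus for $\mathbf{MIL}$ --- the least normal modal logic (for a binary diamond) containing the axioms (Re.), (4), (Co.) and (Dk.) --- and to prove the single fact this reduction requires: that the \emph{same} calculus is sound with respect to poset frames under the $\mathrm{mub}$-clause. Concretely, I would show that each of the four axioms is $\mathbf{MIN}$-valid and that each inference rule preserves $\mathbf{MIN}$-validity (indeed, preserves truth at a point in every poset model, which is also what the consequence-relation reading of $\mathbf{MIL}\subseteq\mathbf{MIN}$ needs). Granting this, every $\mathbf{MIL}$-theorem is $\mathbf{MIN}$-valid, which is exactly $\mathbf{MIL}\subseteq\mathbf{MIN}$.

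First I would note that the $\mathrm{mub}$-clause interprets $\Min$ as precisely the standard existential modality of the ternary frame relation $R(s;t,u)\iff s\in\mathrm{mub}\{t,u\}$. Consequently the ``normal'' machinery --- monotonicity/distribution over $\lor$ in each argument, the laws $\Min\bot p\leftrightarrow\bot$ and $\Min p\bot\leftrightarrow\bot$, the necessitation-type rules, modus ponens, and uniform substitution --- is sound by the generic soundness of minimal normal modal logics over relational frames, appealing to no property of $\leq$. So the only genuine content lies in the four designated axioms, and the only fact about posets they use is the elementary observation
\[
  t\leq s\ \Longrightarrow\ s\in\mathrm{mub}\{s,t\},
\]
valid because $s$ is an upper bound of $\{s,t\}$ while every upper bound $x$ of $\{s,t\}$ satisfies $s\leq x$, hence $x\not<s$; I will refer to this as $(\star)$. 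With $(\star)$ available the axioms are quick. (Co.) is immediate from $\{t,u\}=\{u,t\}$. For (Re.), if $s\Vdash_M p\land q$, then the witnesses $t=u=s$ together with $(\star)$ give $s\in\mathrm{mub}\{s,s\}$, so $s\Vdash_M\Min pq$. For (Dk.), if $s\Vdash_M p$ and $s\Vdash_M\Min qr$ with witnesses $t\Vdash_M q$, $u\Vdash_M r$ and $s\in\mathrm{mub}\{t,u\}$, then $t\leq s$, so $(\star)$ yields $s\in\mathrm{mub}\{s,t\}$ and hence $s\Vdash_M\Min pq$. For (4), I would first verify the claim from the Extension-of-$\mathbf{S4}$ remark that $\Past\varphi:=\Min\varphi\top$ has the expected semantics $s\Vdash_M\Past\varphi$ iff $t\Vdash_M\varphi$ for some $t\leq s$ --- the nontrivial inclusion being $(\star)$ with $u=s$ --- so that $\Past$ is the ordinary $\mathbf{S4}$-diamond along $\leq^{-1}$, and then (4) reduces to the $4$-axiom, valid since $\leq$ (equivalently $\leq^{-1}$) is transitive.

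I do not expect a real obstacle here. The only point calling for a little care is the bookkeeping of exactly which schemata and rules make up the ``least normal modal logic'' for a binary modality, together with the routine check that each of them is valid under an arbitrary ternary relational semantics; everything specific to posets collapses to $(\star)$. All the difficulty of the paper lies in the converse inclusion $\mathbf{MIN}\subseteq\mathbf{MIL}$, where the representation construction is needed.
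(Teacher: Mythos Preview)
Your proposal is correct and follows exactly the approach the paper sketches: the paper's proof is simply the one-line remark that $\mathbf{MIN}$ is a normal modal logic validating (Re.), (4), (Co.), and (Dk.), and you have supplied precisely that routine verification in full detail. There is nothing to add.
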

\begin{proof}
    Routine check that $\mathbf{MIN}$ is a normal modal logic validating the axioms (Re.), (4), (Co.), and (Dk.) under the minimal-upper-bound interpretation of the modality.
\end{proof}

\section{Proof of the Principal Lemma and Main Results}\label{sec:minComp}
% Before embarking on the principal lemma allowing us to deduce $\mathbf{MIN}\subseteq\mathbf{MIL}$, we provide some intuition for it---also gesturing at why the fact that $\mathbf{MIN}=\mathbf{MIL}$ actually is not so surprising when further analyzed.
% \\\\
We proceed by outlining the general strategy of, and some intuition for, our proof of the other inclusion $\mathbf{MIN}\subseteq\mathbf{MIL}$.
\paragraph{Proof Strategy.}\label{strategy} Note that it suffices to show that for any formula $\psi$, model $\mathbb{M}=(P, \leq, V)$ and state $s\in P$, if 
    $$\mathbb{M}, s\nVdash_S \psi,$$
then there are some $\mathbb{M'}, s'$ s.t.
    $$\mathbb{M'}, s'\nVdash_M \psi.$$
The basic idea for proving this is a proof by representation where we extend any given poset model $\mathbb{M}=(P, \leq, V)$ to a poset model $\mathbb{M'}=(P', \leq', V')\supseteq (P, \leq, V)$ s.t. the following two conditions are met:
\begin{align*}
    &(1) &&\forall s\in P, \varphi\in \mathcal{L}:\qquad  \mathbb{M}, s\Vdash_S\varphi && \Leftrightarrow && \mathbb{M}', s\Vdash_S\varphi\\
    &(2) &&\forall s', t', u'\in P': \qquad s'\in\mathrm{mub}\{t', u'\} && \Leftrightarrow && s'=\sup\{t', u'\}
\end{align*}
Thus, on $\mathbb{M}'$, the relations $\Vdash_S$ and $\Vdash_M$ coincide, and we've obtained our $\Vdash_M$-refuting model. 

%This is a representation result%, which we will prove using the notion of \textit{onto p-morphisms}. 
Before delving into the technical intricacies of this representation construction, let's first develop some intuition behind it. 
\paragraph{Intuition for Representation Construction.} The crux of the proof strategy lies in obtaining a poset that satisfies clause (2). This approach is motivated by the observation that for any poset $(P, \leq)$ and any elements $s,t,u\in P$, one implication of (2) holds true:
\begin{align*}
    &s\in\mathrm{mub}\{t, u\} && \Leftarrow && s=\sup\{t, u\}
\end{align*}
Thus, our problem are failures of the converse direction: when $s$ is a minimal upper bound of $\{t, u\}$ but not its supremum.
% is dealing with cases where
% \begin{align*}
%     &s\in\min\{t, u\} && \text{but} && s\neq\sup\{t, u\}.
% \end{align*}
In accordance with clause (1), which requires preservation of $\Vdash_S$, we seek to address such cases not by making $s$ be the least upper bound of $\{t,u\}$, but by ensuring that it no longer is a minimal upper bound.

Crucially, we will exploit the fact that there are two ways for an upper bounded set $\{t,u\}$ to lack a supremum: 
\begin{enumerate}
    \item [(i)] Incomparable upper bounds; vs. 
    \item [(ii)] Infinitely descending chain(s) of upper bounds.
\end{enumerate}
% Essentially, the idea is to transform all cases of (i) into (also being) cases of (ii). In this way, we transform all cases where $\sup\{t,u\}\neq s\in\min\{t,u\}$ into cases where $s$ no longer is a minimal upper bound; in fact, into cases where $\min\{t,u\}=\varnothing$. Put naively, we will be shooting in points as illustrated below:
In essence, the idea is to transform cases of (i) into (also being) cases of (ii). By doing so, we convert scenarios where $\sup\{t, u\} \neq s \in \mathrm{mub}\{t, u\}$ into instances where $s$ no longer serves as a minimal upper bound, thus achieving the sought bi-implication. In spirit, we will be transforming as depicted below:

%\newpage
\begin{center}
\begin{tikzpicture}[scale=1.19, label distance=2pt, shorten >=.5pt, shorten <=.5pt]

   \node [fill, circle, inner sep=1pt, label=right:$t$, label=left:{}] (worldu) at (-4,0) {};
   \node [fill, circle, inner sep=1pt, label=left:$s$, label=right:{}] (worldw) at (-2,2) {};
   \node [fill, circle, inner sep=1pt, label=right:$m$, label=left:{}] (worldm) at (-4,2) {};
   \node [fill, circle, inner sep=1pt, label=left:$u$, label=right:{}] (worldv) at (-2,0) {};

   \node [inner sep=0pt, label=right:$\rightsquigarrow$, label=left:{}] (ar1) at (-1.75,1) {};

   \node [fill, circle, inner sep=1pt, label=right:$t$, label=left:{}] (worldu2) at (-1,0) {};
   \node [fill, circle, inner sep=1pt, label=left:$s$, label=right:{}] (worldw2) at (1,2) {};
   \node [fill, circle, inner sep=1pt, label=right:$m$, label=left:{}] (worldm2) at (-1,2) {};
   \node [fill, circle, inner sep=1pt, label=left:$u$, label=right:{}] (worldv2) at (1,0) {};
   
   \node [fill, circle, inner sep=1pt, label=right:$s_0$, label=left:{}] (worldw21) at (.6,1.2) {};

   \node [inner sep=0pt, label=right:$\rightsquigarrow\cdots\rightsquigarrow$, label=left:{}] (ar2) at (1.1,1) {};

   \node [fill, circle, inner sep=1pt, label=right:$t$, label=left:{}] (worldu3) at (2.5,0) {};
   \node [fill, circle, inner sep=1pt, label=left:$s$, label=right:{}] (worldw3) at (4.5,2) {};
   \node [fill, circle, inner sep=1pt, label=right:$m$, label=left:{}] (worldm3) at (2.5,2) {};
   \node [fill, circle, inner sep=1pt, label=left:$u$, label=right:{}] (worldv3) at (4.5,0) {};
   
   \node [fill, circle, inner sep=1pt, label=right:$s_0$, label=left:{}] (worldw31) at (4.1,1.2) {};
   \node [fill, circle, inner sep=1pt, label=right:$m_0$, label=left:{}] (worldm31) at (2.9,1.2) {};
   \node [fill, circle, inner sep=1pt, label=right:$s_1$, label=left:{}] (worldw32) at (3.92,0.8) {};

   \node [inner sep=0pt, label=right:$\rightsquigarrow\cdots\rightsquigarrow$, label=left:{}] (ar2) at (-2.5,-2) {};
   
   \node [fill, circle, inner sep=1pt, label=right:$t$, label=left:{}] (worldu4) at (-1,-3) {};
   \node [fill, circle, inner sep=1pt, label=left:$s$, label=right:{}] (worldw4) at (1,-1) {};
   \node [fill, circle, inner sep=1pt, label=right:$m$, label=left:{}] (worldm4) at (-1,-1) {};
   \node [fill, circle, inner sep=1pt, label=left:$u$, label=right:{}] (worldv4) at (1,-3) {};
   
   \node [fill, circle, inner sep=1pt, label=right:$s_0$, label=left:{}] (worldw41) at (0.6,-1.8) {};
   \node [fill, circle, inner sep=1pt, label=right:$m_0$, label=left:{}] (worldm41) at (-0.6,-1.8) {};
   \node [fill, circle, inner sep=1pt, label=right:$s_1$, label=left:{}] (worldw42) at (0.42,-2.2) {};
   \node [fill, circle, inner sep=1pt, label=right:$m_1$, label=left:{}] (worldm42) at (-0.42,-2.2) {};
   
   \node [inner sep=0pt, label=right:$\mathbf{\vdots}$, label=left:{}] (ar4) at (0.24,-2.35) {};
   \node [inner sep=0pt, label=right:$\mathbf{\vdots}$, label=left:{}] (ar5) at (-0.58,-2.35) {};

  \draw [->] (worldu) to (worldw);
    \draw [->] (worldu) to (worldm);
    \draw [->] (worldv) to (worldw);
  \draw [->] (worldv) to (worldm);

  \draw [->] (worldu2) to (worldw2);
    \draw [->] (worldu2) to (worldm2);
    \draw [->] (worldv2) to (worldw2);
  \draw [->] (worldv2) to (worldm2);
  
  \draw [->, dotted] (worldu2) to (worldw21);
    \draw [->, dotted] (worldv2) to (worldw21);
    \draw [->, dotted] (worldw21) to (worldw2);

    \draw [->] (worldu3) to (worldw3);
    \draw [->] (worldu3) to (worldm3);
    \draw [->] (worldv3) to (worldw3);
  \draw [->] (worldv3) to (worldm3);
  
  \draw [->, dotted] (worldu3) to (worldw32);
    \draw [->, dotted] (worldv3) to (worldw32);
    \draw [->, dotted] (worldw31) to (worldw3);
    \draw [->, dotted] (worldw32) to (worldw31);
    \draw [->, dotted] (worldu3) to (worldm31);
    \draw [->, dotted] (worldv3) to (worldm31);
    \draw [->, dotted] (worldm31) to (worldm3);

    \draw [->] (worldu4) to (worldw4);
    \draw [->] (worldu4) to (worldm4);
    \draw [->] (worldv4) to (worldw4);
  \draw [->] (worldv4) to (worldm4);
  
  %\draw [->, dotted] (worldu4) to (worldw42);
    %\draw [->, dotted] (worldv4) to (worldw42);
    \draw [->, dotted] (worldw41) to (worldw4);
    \draw [->, dotted] (worldw42) to (worldw41);
    %\draw [->, dotted] (worldu4) to (worldm41);
    %\draw [->, dotted] (worldv4) to (worldm41);
    \draw [->, dotted] (worldm41) to (worldm4);
    \draw [->, dotted] (worldm42) to (worldm41);
    \draw [->, dotted] (worldu4) to (-0.42,-2.45);
    \draw [->, dotted] (worldu4) to (0.38,-2.45);
    \draw [->, dotted] (worldv4) to (-0.38,-2.45);
    \draw [->, dotted] (worldv4) to (0.42,-2.45);

\end{tikzpicture}
\end{center}
The transformation will take place step by step,\footnote{In fact, quite literally, the construction can be seen as an instance of the step-by-step method.} adding one point of the infinite chain at a time (first $s_0$, then $s_1$, and so on). To ensure $\Vdash_S$-preservation, each point in the constructed chain $s_0,s_1,\hdots$  below $s$ will be constructed as to satisfy the same formulas as $s$. For this reason, the basic idea depicted above is, as-such, too naive and runs into the following problems:

\begin{itemize}
    \item[$\bullet$] $s$ (or $m$) might be the supremum of other points, hence merely `duplicating' $s$ does not suffice. Our solution will involve duplicating the full downset, ${\downarrow}s$, and positioning each duplicate directly below its duplicator.
    \item[$\bullet$] When adding, for example, $s_0$, not only does $\{t,u\}$ gain a new upper bound, but so does every $\{y,z\}$ where $y\leq t$ and $z\leq u$. Problem then is that, while $\{t,u\}$ did not have a supremum, $\{y,z\}$ could still have had some supremum $x$. Therefore, to maintain $x$ as the supremum of $\{y,z\}$, it does not suffice to have $s_0$ seen by all points in ${\downarrow}t$ and ${\downarrow}u$: it must also be seen by $x$. Following this line of reasoning to its conclusion, $s_0$ must be seen by all points in the least downset containing $\{t,u\}$ closed under binary suprema.
\end{itemize}
Hopefully, the depiction has illuminated the (naive) spirit of the argument and the bullet points offered intuition for the actual, more complex construction.\footnote{I recommend revisiting the bullet points during or after studying the lemma below.} It will all be made rigorous in the subsequent lemma and proposition, following one final preliminary definition of the concept of a \textit{supremum p-morphism}.\footnote{As condition (1) specifies, we need preservation of $\Vdash_S$, not necessarily of $\Vdash_M$, hence the concept of a supremum p-morphism and not of a m.u.b. p-morphism.}

\begin{definition}
    Let $(P,\leq), (P', \leq')$ be posets. A function
        $$f:P'\to P$$
    is a \textnormal{(supremum) p-morphism} if it satisfies the following conditions:
    \begin{enumerate}[leftmargin=40pt]
        \item [\textnormal{(forth)}] if $x'=\sup'\{y',z'\}$, then $f(x')=\sup\{f(y'),f(z')\}$; and
        \item [\textnormal{(back)}] if $f(x')=\sup\{y,z\}$, then there exist $\{y',z'\}\subseteq P'$ s.t. $f(y')=y, f(z')=z$ and $x'=\sup'\{y',z'\}$.
    \end{enumerate}
    % If $f$ additionally satisfies 
    % \begin{enumerate}
    %     \item [($\textbackslash$-back)] if $Cxf(y')z$, then there exist $\{x',z'\}\subseteq W'$ s.t. $f(x')=x, f(z')=z$ and $C'x'y'z'$,
    % \end{enumerate}
    % we denote it a \textit{$\textbackslash$-p-morphism}.%\marginnote{Note the symmetry in the two back clauses: this is caused by `$\textbackslash$' and `$\Sup$' referring to the same relation, but from different perspectives.}
    % When dealing with preorder frames $(W,\leq)$, \text{[$\textbackslash$-]p-morphisms} are defined in terms of the induced $(W,S_\leq)\in \mathcal{S}_\text{Pre}\subseteq \mathcal{C}$.
\end{definition}
It is readily verified that $\Vdash_S$ is preserved by p-morphic images, at the level of pointed models and at the level of frames: if $(P,\leq)$ is a p-morphic image of $(P',\leq')$ and $(P',\leq')\Vdash_S \varphi$, then $(P,\leq)\Vdash_S \varphi$.\footnote{Take note that this concept of a (supremum) p-morphism deviates from the conventional concept of a p-morphism between posets. This adjustment is necessary because we're concerned with suprema, rather than solely `$\leq$', so to speak.} Thus, it suffices to show that every poset $(P,\leq)$ is the p-morphic image of a poset $(P', \leq')$ satisfying condition (2).

% With this last clarification of the previous section, it perhaps, contrarily, now appears a bit surprising that we soon will show that $\textit{MIL}_\textit{Pos}^\textit{Min}=\textit{MIL}_\textit{Pos}$---especially having in mind the way we repaired $\neg\Sup$-defects (and (forth)-defects): by introducing incomparable upper bounds, so-called dummies (or duplicates), which do, indeed, ensure that if $x=\sup_n\{y,z\}$, then $x\neq \sup_{n+1}\{y,z\}$, but, contrariwise, do not change that $x$ is a \textit{minimal} upper bound of $\{y,z\}$.\marginnote{Recall Example \ref{ex:dummies}.} I.e., when dealing with the induced supremum relation of a preorder/poset, the dummies did repair the defects, but had it been the induced minimal-u.b. relation, dummy repairs would not have worked.

\begin{lemma}[Principal lemma]\label{lm:minDeletion}
    Let $(P,\leq)$ be a poset and $\{s,t,u\}\subseteq P$ be s.t. $s\in\mathrm{mub}\{t,u\}$ but $s\neq \sup\{t,u\}$. Then $(P,\leq)$ is the p-morphic image of a poset $(P', \leq')$ s.t.
    \begin{enumerate}
        \item [\textnormal{1.}] $P\subseteq P', |P'|\leq \max\{\aleph_0, |P|\}$;
        \item [\textnormal{2.}] ${\leq'}\mathrel{\cap} (P\times P)={\leq}$;
        \item [\textnormal{3.}] the witnessing p-morphism $f$ %$f:(P', \leq')\to (P, \leq)$ 
        restricts to the identity on $P$, i.e., $f{\upharpoonright} P=\mathrm{Id}_P$;
        \item [\textnormal{4.}] if $x=\sup\{y,z\}$, then $x=\sup'\{y,z\}$;
        \item [\textnormal{5.}] $s\notin \mathrm{mub}'\{t,u\}$.
    \end{enumerate}
\end{lemma}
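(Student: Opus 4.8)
I would prove the lemma by a one‑step copying (representation) construction: to $(P,\leq)$ adjoin a single disjoint copy of the downset ${\downarrow}s$, glued on directly beneath ${\downarrow}s$ and arranged so that the copy of $s$ becomes an upper bound of $\{t,u\}$ lying strictly below $s$; the map collapsing this copy back onto ${\downarrow}s$ and fixing the rest of $P$ will be the required p‑morphism $f$.

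\emph{Preliminaries and the key set.} From $s\in\mathrm{mub}\{t,u\}$ and $s\neq\sup\{t,u\}$ one reads off that $t\neq u$, that $t,u<s$ strictly, and --- since $s$ is not \emph{least} among the upper bounds --- that there is an upper bound $m$ of $\{t,u\}$ with $s\not\leq m$; by minimality of $s$ it then follows that $m$ and $s$ are $\leq$‑incomparable. Let $C\subseteq P$ be the smallest downward‑closed subset containing $\{t,u\}$ that is closed under the binary suprema existing in $P$. Since ${\downarrow}s$ and ${\downarrow}m$ are themselves downward‑closed and closed under existing binary suprema, $C\subseteq{\downarrow}s\cap{\downarrow}m$. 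The crucial fact is that $s\notin C$: were $s\in C$, a stage‑by‑stage analysis (using $s\notin{\downarrow}t\cup{\downarrow}u$) would force $s=\sup\{y,z\}$ for some $y,z\in C\subseteq{\downarrow}m$; but then $m$ bounds $\{y,z\}$, so $s=\sup\{y,z\}\leq m$, contradicting $s\not\leq m$.

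\emph{The construction.} Put $P'=P\sqcup\{x^{\ast}:x\in{\downarrow}s\}$, set $f{\upharpoonright}P=\mathrm{Id}_P$ and $f(x^{\ast})=x$, and extend $\leq$ to $\leq'$ by: $x^{\ast}\leq'y^{\ast}\iff x\leq y$; $x^{\ast}\leq'w\iff x\leq w$ for $w\in P$ (each copy lies just below its original and everything above it); and, for $w\in P$, $w\leq's^{\ast}\iff w\in C$, with no original lying below any copy $x^{\ast}$ for which $x\neq s$. One checks $\leq'$ is a partial order; antisymmetry is exactly where $s\notin C$ is needed, as it prevents $s^{\ast}\leq's$ and $s\leq's^{\ast}$ from coexisting. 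Conditions 1--3 are then immediate, and condition 5 is immediate too: $t,u\in C$ gives $t,u\leq's^{\ast}<'s$, so $s$ is no longer minimal among the upper bounds of $\{t,u\}$. For condition 4 the point is that the only new element that can lie above two originals $y,z$ is $s^{\ast}$, and that requires $y,z\in C$; but $C$ is sup‑closed, so $a:=\sup\{y,z\}\in C$, hence $a\leq's^{\ast}$, so $a$ remains below every $\leq'$‑upper bound of $\{y,z\}$ and stays their supremum. It remains to show $f$ is a p‑morphism: (forth) is a routine case distinction reducing each instance to the corresponding fact in $P$ (the mixed copy/original cases use $x^{\ast}\leq'w\iff x\leq w$; the cases with $p'=s^{\ast}$ use $s\not\leq m$ to exclude the configurations where $s^{\ast}$ would fail to be the relevant join).

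\emph{Main obstacle.} The delicate clause is (back). Because $s^{\ast}$ is an upper bound of $\{t,u\}$ strictly below $s$, monotonicity of $f$ together with the minimality of $s$ \emph{force} $f(s^{\ast})=s$, so every factorization $s=\sup\{y,z\}$ in $P$ must be realised below $s^{\ast}$ in $P'$ --- which is precisely why one copies all of ${\downarrow}s$ and not merely a single point. Given $s=\sup\{y,z\}$ (so $y,z\leq s$), the pair $y^{\ast},z^{\ast}$ works: its only $\leq'$‑upper bounds are the originals above $s$ together with $s^{\ast}$, whence $\sup'\{y^{\ast},z^{\ast}\}=s^{\ast}$ and $f(s^{\ast})=s=\sup\{f(y^{\ast}),f(z^{\ast})\}$; the same pairing witnesses (back) for a proper copy $p'=w^{\ast}$ with $w=\sup\{y,z\}$, and for $p'\in P$ the (back) requirement is just condition 4. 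The remaining work --- verifying $\leq'$ is a partial order and grinding through the copy‑versus‑original cases of (forth) and (back) --- is routine; the conceptual content is isolated in the fact $s\notin C$, i.e.\ in the observation that $s$'s failure to be a \emph{least} upper bound, witnessed by $m$, is exactly what allows a lower copy of $s$ to be placed over $\{t,u\}$ without disturbing any supremum. (Iterating this lemma is what, in the subsequent proposition, builds the infinite descending chain of upper bounds below $s$ and ultimately a model in which every minimal upper bound is least.)
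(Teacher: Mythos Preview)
Your proposal is correct and essentially identical to the paper's own proof: both adjoin a disjoint copy of ${\downarrow}s$, define the same $C=\mathcal{G}(t,u)$ as the least sup-closed downset containing $\{t,u\}$, place originals from $C$ (and only those) below the copy $s^{\ast}$, and collapse via $f(x^{\ast})=x$; the pivotal observation $s\notin C$ (obtained from the incomparable upper bound $m$) and the verifications of conditions 1--5 and of (forth)/(back) match the paper's argument step for step, modulo notation.
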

\begin{proof}
    We let 
        $$P'\mathrel{:=}P\sqcup {\downarrow}s=\{(x,0), (y,1)\mid x\in P, y\in{\downarrow}s\},$$
    where ${\downarrow} s\mathrel{:=}\{s'\mid s'\leq s\}$, and let
        $$f:P' \to P$$
    be the function given by
        $$f(x,i)=x$$
    for all $x\in P, i\in \{0,1\}$. 
    
    To define the relation $\leq'$, we need an auxiliary definition of $\mathcal{G}(t,u)\subseteq P$ as the least downset containing $\{t,u\}$ and closed under binary suprema. That is, 
        $$\mathcal{G}(t,u)=\bigcup_{n\in\omega}\mathcal{G}_n(t,u),$$ 
    where 
    
        $$\mathcal{G}_0(t,u)={\downarrow}t\mathrel{\cup} {\downarrow} u$$
    and
        $$\mathcal{G}_{n+1}(t,u)={\downarrow}\big(\mathcal{G}_n(t,u)\cup \big\{\sup\{b_n, c_n\}\mid \{b_n, c_n\}\subseteq \mathcal{G}_n(t,u)\big\}\big).$$
    Then for all $(x,i), (y,j)\in P'$, we let $(y,j)\leq'(x,i)$ \textbf{iff}
     \begin{enumerate}[label=(\roman*), leftmargin=40pt]
        \item $i=0$ and $y\leq x$, \textbf{or}
        \item $j=i=1$ and $y\leq x$, \textbf{or}
        \item $j=0, i=1, y\in \mathcal{G}(t,u)$, and $x=s$. 
    \end{enumerate}   
    Having defined $P', \leq',$ and $f$, we claim that they witness the conditions of the lemma, namely:
    \begin{enumerate}[leftmargin=40pt]
        \item [(a)] $(P', \leq')$ forms a poset; 
        \item[(b)] conditions 1.-5. of the lemma are met; and
        \item[(c)] $f$ constitutes an onto p-morphism. 
    \end{enumerate}
    To prove this, we will need the following assertions:
    \begin{itemize}[leftmargin=40pt]
        \item if $y\in \mathcal{G}(t,u)$, then $y<s$; and
        \item $f$ is order-preserving, i.e., $(y,j)\leq' (x,i)$ implies $y\leq x$.
    \end{itemize}
    To show the former, note that since $\mathrm{mub}\{t,u\}\ni s\neq \sup\{t,u\}$, there must be some $m\geq t,u$ s.t. $m$ and $s$ are incomparable. Using this, we show that for all $n\in\omega$, $\mathcal{G}_n(t,u)\subsetneq {\downarrow}s$ and $\mathcal{G}_n(t,u)\subsetneq {\downarrow}m$, which suffices for establishing this assertion. 
    
    Since $s$ and $m$ are incomparable upper bounds, we know that $d<e$ for $d\in \{t,u\}$ and $e\in \{s,m\}$, which shows the base case. Accordingly, assume for some $n\in \omega$ that $\mathcal{G}_n(t,u)\subsetneq {\downarrow}s$ and $\mathcal{G}_n(t,u)\subsetneq {\downarrow}m$, and let $\{b_n, c_n\}\subseteq \mathcal{G}_n(t,u)$ be arbitrary s.t. $\sup\{b_n, c_n\}$ exists. It then suffices to show that $\sup\{b_n, c_n\}<s$ and $\sup\{b_n, c_n\}<m$. Since both $s$ and $m$ are upper bounds of $\{b_n, c_n\}$ by the IH, we have that $\sup\{b_n, c_n\}\leq s$ and $\sup\{b_n, c_n\}\leq m$. And because $s$ and $m$ are incomparable, the inequalities must be strict---finalizing our proof of the former assertion. 
    
    Next, to show the latter auxiliary assertion, assume $(y,j)\leq'(x,i)$. If $i=0$ or $j=i=1$, we have that $y\leq x$, showing the required. And if $j=0, i=1, y\in \mathcal{G}(t,u)$, and $x=s$, then, by the former assertion, $y< s=x$, which covers the last case.
    \\\\
     With these assertions proven, we're ready to tackle (a), (b), and (c), beginning with \textbf{(a)}: showing that $\leq'$ is a partial order.

    \begin{itemize}
        \item \textit{Reflexivity} follows by reflexivity of $\leq$ and (i), (ii). 
        \item \textit{Transitivity}: Suppose $(z,k)\leq'(y,j)\leq'(x,i)$. By order-preservation, $z\leq y\leq x$, so, since $\leq$ is transitive, $z\leq x$. Thus, if $i=0$ or $k=i=1$, then $(z,k)\leq'(x,i)$ as required. And if $k=0$ and $i=1$, we must show that 
            $$z\in \mathcal{G}(t,u) \quad \text{and}\quad x=s.$$
        If $j=0$, then since $(y,j)\leq'(x,i)$, we have that $y\in \mathcal{G}(t,u)$ and $x=s$, so because $z\leq y$ and $\mathcal{G}(t,u)$ is a downset, we also have $z\in \mathcal{G}(t,u)$. Lastly, if $j=1$, then since $(z,k)\leq'(y,j)$, we have that $z\in \mathcal{G}(t,u)$ and $y=s$, so since $y\leq x$ and $x\in {\downarrow}s$, it must also be that $x=s$.
        \item \textit{Anti-symmetry}: Suppose $(y,j)\leq' (x,i)$ and $(x,i)\leq' (y,j)$. If $j=i$, we're done by anti-symmetry of $\leq$. Moreover, we cannot have $j\neq i$, since if, say, $j=0, i=1$, then $y\in \mathcal{G}(t,u)$ and $x=s$, so $y<s=x$, but by order-preservation we also have that $x\leq y$ -- contradiction.
    \end{itemize}
Having established (a), we proceed to verify \textbf{(b)}, namely that 1.-5. are met. 

Note that our definition of $P'$ as the \textit{disjoint} union of $P$ and ${\downarrow}s$ isn't mathematically substantive but chosen for ease of notation; we could just as well have defined $P'$ as an actual set-theoretical extension of $P$. With this in mind, it is easy to see that 1. through 3. are fulfilled. To clarify, establishing conditions 4. and 5. then amounts to the following:
\begin{enumerate}
    \item [4.]if $x=\sup\{y,z\}$, then $(x,0)=\sup'\{(y,0), (z,0)\}$;
    \item [5.]$(s,0)\notin\mathrm{mub}'\{(t,0), (u,0)\}$
\end{enumerate}
For the latter, observe that $(s,1)\leq'(s,0)$, and since $\{t,u\}\subseteq \mathcal{G}(t,u)$, we also have $(t,0),(u,0)\leq'(s,1)$; hence, $(s,0)\notin\mathrm{mub}'\{(t,0), (u,0)\}$. 

Concerning the former, if $x=\sup\{y,z\}$, then $(x,0)$ is an upper bound of $\{(y,0), (z,0)\}$. To show it is the least upper bound, suppose $(v,i)$ is another upper bound of $\{(y,0), (z,0)\}$. We must then demonstrate that $(x,0)\leq'(v,i)$. By order-preservation and the assumption that $x=\sup\{y,z\}$, we know $x\leq v$. If $i=0$, we are done. If $i=1$, we need to show that 
    $$x\in \mathcal{G}(t,u) \quad \text{and}\quad  v=s.$$ 
Since $(v,1)$ is an upper bound of $\{(y,0), (z,0)\}$, we have $\{y,z\}\subseteq \mathcal{G}(t,u)$ and $v=s$. Consequently, as $\mathcal{G}(t,u)$ is closed under binary suprema and $x=\sup\{y,z\}$ by assumption, we also have $x\in \mathcal{G}(t,u)$, which completes the proof of (b).\footnote{Notice how condition 4. is met by reason of our definition of $\mathcal{G}(t,u)$, cf. the latter of the two bullet points preceding this lemma.}\\\\
It remains to verify \textbf{(c)}: that $f$ constitutes an onto p-morphism. It is certainly an onto function, so we need only show that the back and forth clauses hold. Let's address each in turn.

\textit{Forth.} Suppose $(x,i)=\sup'\{(y,j),(z,k)\}$. By order-preservation, $x$ is an upper bound of $\{y,z\}$. So assume that $v$ is another upper bound. Then $(v,0)$ is an upper bound of $\{(y,j),(z,k)\}$, hence $(x,i)\leq' (v,0)$, so by another application of order-preservation, we get that $x\leq v$, which exactly shows that $f(x,i)=x=\sup\{y,z\}=\sup'\{f(y,j),f(z,k)\}$.

\textit{Back.} Suppose $f(x,i)=x=\sup\{y,z\}$ for some $y,z\in P$. Going by cases, we get:
\begin{itemize}
    \item If $i=0$, we have, by 4., that $(x,i)=\sup'\{(y,0), (z,0)\}$. This suffices as $f(y,0)=y$ and $f(z,0)=z$.
    \item If $i=1$, then $x\in {\downarrow}s$, implying $\{y,z\}\subseteq{\downarrow}s$. Consequently, $\{(y,1), (z,1)\}\subseteq P'$. Moreover, $f(y,1)=y, f(z,1)=z$. Therefore, we need only verify that $(x,1)=\sup'\{(y,1), (z,1)\}$. Since $x=\sup\{y,z\}$, $(x,1)$ is an upper bound of $\{(y,1), (z,1)\}$. So suppose $(v,j)$ also is an upper bound of $\{(y,1), (z,1)\}$. Then $v$ is an upper bound of $\{y,z\}$, hence $x\leq v$. Thus, $(x,1)\leq'(v,j)$, as required.
\end{itemize}
This completes our proof of \textbf{(c)} and, consequently, of the lemma.
\end{proof}

With this key lemma at our disposal, we can deduce the succeeding results.

\begin{proposition}\label{pr:p-mMIN}
    Every poset $(P,\leq)$ is the p-morphic image of a poset $(P', \leq')$ satisfying
    \begin{align*}
        &(2) &&\forall s', t', u'\in P': \qquad s'\in\mathrm{mub}\{t', u'\} && \Leftrightarrow && s'=\sup\{t', u'\}
    \end{align*}
\end{proposition}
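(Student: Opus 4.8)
The plan is to clear offending triples one at a time by repeated use of Lemma~\ref{lm:minDeletion} and then pass to a limit. Call a triple $(s,t,u)$ of a poset \emph{bad} if $s\in\mathrm{mub}\{t,u\}$ but $s\neq\sup\{t,u\}$, so that (2) says exactly that a poset has no bad triples. Put $\kappa=\max\{\aleph_0,|P|\}$. By transfinite recursion I build a $\subseteq$-increasing chain of posets $(P_\alpha,\leq_\alpha)$ together with onto p-morphisms $g_\alpha\colon(P_\alpha,\leq_\alpha)\to(P,\leq)$: start with $(P_0,\leq_0)=(P,\leq)$ and $g_0=\mathrm{Id}_P$; at a limit $\lambda$ take unions, $P_\lambda=\bigcup_{\alpha<\lambda}P_\alpha$, ${\leq_\lambda}=\bigcup_{\alpha<\lambda}{\leq_\alpha}$ and $g_\lambda=\bigcup_{\alpha<\lambda}g_\alpha$ (coherent since $f_\alpha{\upharpoonright}P_\alpha=\mathrm{Id}$ by Lemma~\ref{lm:minDeletion}(3), whence $g_{\alpha+1}{\upharpoonright}P_\alpha=g_\alpha$); and at a successor $\alpha+1$, if $P_\alpha$ has no bad triple do nothing, otherwise let a bookkeeping function pick a bad triple $(s,t,u)$ of $P_\alpha$, apply Lemma~\ref{lm:minDeletion} to it to get $(P_{\alpha+1},\leq_{\alpha+1})\supseteq(P_\alpha,\leq_\alpha)$ and a p-morphism $f_\alpha\colon P_{\alpha+1}\to P_\alpha$ fixing $P_\alpha$ pointwise, and set $g_{\alpha+1}=g_\alpha\circ f_\alpha$.

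A handful of routine facts keep the recursion on track, none of them deep. Each $(P_\alpha,\leq_\alpha)$ is a poset: clear at successors; at limits, Lemma~\ref{lm:minDeletion}(2) makes the $\leq_\alpha$ mutual restrictions, so reflexivity, transitivity and antisymmetry of the union are each witnessed inside a single stage. Each $g_\alpha$ is an onto p-morphism: p-morphisms compose (the forth and back clauses, and surjectivity, chain through), which handles successors, and at limits a direct-limit argument works — here one uses that preservation of suprema (Lemma~\ref{lm:minDeletion}(4)) itself passes to limits, so that $\sup_\alpha\{y,z\}$ and $\sup_\lambda\{y,z\}$ agree whenever $y,z$ and a candidate supremum all already lie in $P_\alpha$, which is exactly what the forth and back clauses for $g_\lambda$ need. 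And — the fact that makes processing a triple stick — once $(s,t,u)$ is handled at stage $\alpha$, Lemma~\ref{lm:minDeletion}(5) gives some $w<_{\alpha+1}s$ upper-bounding $\{t,u\}$, and since the order and the property ``being an upper bound of $\{t,u\}$'' only grow along the chain, $w$ keeps witnessing $s\notin\mathrm{mub}_\beta\{t,u\}$ for every $\beta>\alpha$.

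The one genuinely delicate ingredient is the bookkeeping: the schedule must be set up so that \emph{every} triple that is ever bad at some stage is selected at some later stage. Cardinal arithmetic gives $|P_\alpha|\leq\kappa$ for all $\alpha<\kappa^{+}$ (Lemma~\ref{lm:minDeletion}(1) at successors; a union of at most $\kappa$ sets of size $\leq\kappa$ at limits), so each such $P_\alpha$ carries at most $\kappa$ triples, and a standard dovetailing over the $\kappa^{+}$ stages discharges every ``process this bad triple of $P_\alpha$'' task; the construction thus stabilizes at some $\lambda\leq\kappa^{+}$. Set $(P',\leq')=(P_\lambda,\leq_\lambda)$ and $f=g_\lambda$; by the above $f\colon(P',\leq')\to(P,\leq)$ is an onto p-morphism, so only (2) for $(P',\leq')$ remains. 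If some $(s,t,u)$ were bad in $P'$, fix $\alpha$ with $s,t,u\in P_\alpha$: upper bounds of $\{t,u\}$ below $s$ in $P_\alpha$ are also such in $P'$, so $s\in\mathrm{mub}_\alpha\{t,u\}$, and preservation of suprema upward forces $s\neq\sup_\alpha\{t,u\}$; hence $(s,t,u)$ is bad in $P_\alpha$, and stays bad until processed, say at stage $\gamma<\lambda$ — but then it stays cleared, so $s\notin\mathrm{mub}_{P'}\{t,u\}$, contradicting its badness in $P'$.

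Accordingly, I expect the main obstacle to be purely the limit and bookkeeping management described above: choosing the schedule so no bad triple slips through, and checking that posethood, both p-morphism clauses, preservation of suprema, and permanence of a cleared triple all survive passage to unions. All the conceptual content — in particular the trick of trading incomparable upper bounds for infinite descending chains of upper bounds — is already packaged inside Lemma~\ref{lm:minDeletion}, so this proof is essentially the step-by-step method run to completion.
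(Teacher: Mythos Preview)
Your proposal is correct and follows essentially the same approach as the paper: iterate Lemma~\ref{lm:minDeletion} via bookkeeping and take the union. The paper restricts to countable $P$ (relegating the general case to a footnote about transfinite recursion), enumerates triples from $P\cup A$ with $A$ a fixed countable reservoir, and at each step processes the least bad triple, passing to the union at $\omega$; your write-up is the transfinite version the paper alludes to, and is in fact more explicit about why suprema, posethood, and the p-morphism clauses survive limit stages. One small wording quibble: ``stays bad until processed'' is not literally true, since processing some other triple $(s,t',u')$ can incidentally clear $(s,t,u)$; but your argument only needs that a triple, once cleared, stays cleared (which you do establish), so the conclusion is unaffected.
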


\begin{proof}
     The proposition follows through a repeated application of the preceding lemma. 
     
     To begin, for ease of notation, we assume $(P, \leq)$ is countable.\footnote{This is without loss of generality since the general case follows through transfinite recursion, a theoretically straightforward but notationally cumbersome adjustment.}
     
     Next, we enumerate all triples $(s,t,u)\in (P\cup A)$, where $A$ is any fixed denumerably infinite set disjoint from $P$. 

     Then, we proceed iteratively. Starting with $(P_0, \leq_0)\mathrel{:=}(P, \leq)$, we generate $(P_{n+1}, \leq_{n+1})$ from $(P_{n}, \leq_{n})$ by applying the preceding lemma to the least tuple $(s,t,u)\in (P\cup A)$ from our enumeration s.t. $s\in \mathrm{mub}_{n}\{t,u\}$ but $s\neq\sup_{n}\{t,u\}$. By condition 1. of the lemma, the added points $P_{n+1}\setminus P_n$ can be drawn as new points from our auxiliary stock $A$. We denote the associated onto p-morphisms by $f_{n+1}: P_{n+1}\to P_n$, and define the composed functions derived from these as $g_{n+1} \mathrel{:=} f_1 \circ \cdots \circ f_{n+1}: P_{n+1} \to P$. Recall that p-morphisms are closed under composition, so each $g_n$ is also a p-morphism.

     % Additionally, we construct a sequence of functions $g_n:P_n\to P$, by composing the corresponding onto p-morphisms, starting with $g_0\mathrel{:=}\textsc{Id}:P\to P$.

     Lastly, defining        
        $$P_\omega\mathrel{:=}\bigcup_{n\in\omega}P_n, \qquad \leq_\omega\mathrel{:=}\bigcup_{n\in\omega}\leq_n,$$
     we claim: 
     \begin{enumerate}
         \item[(a)] $(P_\omega, \leq_\omega)$ forms a poset;
        \item [(b)] 
        $g_\omega\mathrel{:=}\bigcup_{n\in \omega}g_n$ is an onto p-morphism from $(P_\omega, \leq_\omega)$ to $(P, \leq)$; and
        \item [(c)]$(P_\omega, \leq_\omega)$ fulfills clause (2), i.e., 
        \begin{align*}
        &(2) &&\forall s', t', u'\in P_\omega: \qquad s'\in\mathrm{mub}\textit{}_{\omega}\{t', u'\} && \Leftrightarrow && s'=\sup\textit{}_{\omega}\{t', u'\}
    \end{align*}
     \end{enumerate}    
    (a) is straightforward. To establish (b), we note that $g_\omega$ is a function because of 3. of the foregoing lemma. It is evidently onto. The forth condition follows using 2. of the lemma and the forth conditions of the $g_n$s, while the back condition follows using the back condition of the $g_n$s and 4. of the lemma. Lastly, (c) follows by the enumeration of triples and 2. and 5. of the lemma.
\end{proof}
Finally, we can deduce our main theorem and corollaries:

\begin{theorem}\label{maintheorem}
    $\mathbf{MIN}=\mathbf{MIL}$
\end{theorem}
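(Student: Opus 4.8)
The plan is to combine Theorem~\ref{Soundness} with Proposition~\ref{pr:p-mMIN}. Since Theorem~\ref{Soundness} already gives $\mathbf{MIL}\subseteq\mathbf{MIN}$, it remains to prove the converse inclusion $\mathbf{MIN}\subseteq\mathbf{MIL}$, and for this I would follow the proof strategy set out above: contrapositively, from a model refuting $\psi$ under the sup-semantics, manufacture a model refuting $\psi$ under the mub-semantics.

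First, suppose $\psi\notin\mathbf{MIL}$. Then there is a poset model $\mathbb{M}=(P,\leq,V)$ and a state $s\in P$ with $\mathbb{M},s\nVdash_S\psi$. Apply Proposition~\ref{pr:p-mMIN} to the underlying frame $(P,\leq)$ to obtain a poset $(P',\leq')$ satisfying clause $(2)$ together with an onto p-morphism $g:P'\to P$. Pull the valuation back along $g$ by setting $V'(p):=g^{-1}[V(p)]$, let $\mathbb{M}'=(P',\leq',V')$, and pick some $s'\in P'$ with $g(s')=s$ (one exists since $g$ is onto). Because $\Vdash_S$ is preserved by supremum p-morphic images of models---the standard p-morphism argument, with the forth and back clauses handling the modal step---we get $\mathbb{M}',w\Vdash_S\varphi$ iff $\mathbb{M},g(w)\Vdash_S\varphi$ for every $w\in P'$ and every $\varphi\in\mathcal{L}$; in particular $\mathbb{M}',s'\nVdash_S\psi$.

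Next I would observe that on $\mathbb{M}'$ the two satisfaction relations coincide: a routine induction on $\varphi$ shows $\mathbb{M}',w\Vdash_S\varphi$ iff $\mathbb{M}',w\Vdash_M\varphi$ for all $w\in P'$, the only nontrivial case being the modality, where the clauses for $\Sup$ and $\Min$ have literally the same truth condition because clause $(2)$ makes $\mathrm{mub}\{t',u'\}$ and $\{\sup\{t',u'\}\}$ agree for all $t',u'\in P'$. Hence $\mathbb{M}',s'\nVdash_M\psi$, so $\psi\notin\mathbf{MIN}$. This establishes $\mathbf{MIN}\subseteq\mathbf{MIL}$, and together with Theorem~\ref{Soundness} we conclude $\mathbf{MIN}=\mathbf{MIL}$. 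The same argument, carrying a set of premises $\Gamma$ through the pullback, yields the equality at the level of consequence relations too.

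As for the main obstacle: the real difficulty has already been discharged in Proposition~\ref{pr:p-mMIN} (and the Principal Lemma behind it)---namely, building the poset $(P',\leq')$ in which every minimal upper bound is already a least upper bound while preserving all existing suprema and admitting a p-morphism onto the original frame. Given that proposition, the remaining steps here are essentially bookkeeping: that the pullback valuation behaves, that $\Vdash_S$-truth transfers along a supremum p-morphism, and that $\Vdash_S$ and $\Vdash_M$ collapse on any frame satisfying clause $(2)$. The one point deserving a little care is checking that the notion of p-morphism invoked is exactly the tailored \emph{supremum} p-morphism of the preceding definition, and not the usual order-theoretic one---which is precisely why that definition was introduced, since it is under that notion (and not plain $\leq$-p-morphism) that $\Vdash_S$-truth is guaranteed to be preserved.
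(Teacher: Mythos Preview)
Your proposal is correct and follows essentially the same route as the paper: invoke Theorem~\ref{Soundness} for one inclusion, and for the other apply Proposition~\ref{pr:p-mMIN}, transfer $\Vdash_S$-truth along the supremum p-morphism, and then collapse $\Vdash_S$ and $\Vdash_M$ on the resulting frame via clause~(2)---exactly as laid out in the \emph{Proof Strategy} paragraph. The only cosmetic difference is that you phrase the transfer via a pullback valuation and a chosen preimage $s'$, whereas the paper's wording emphasises that $(P',\leq',V')$ can be taken to literally extend $(P,\leq,V)$ (using condition~3 of Lemma~\ref{lm:minDeletion}); both formulations are equivalent.
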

\begin{proof}
    We have demonstrated that $\mathbf{MIL}\subseteq \mathbf{MIN}$ in Theorem \ref{Soundness}. The converse inclusion is an immediate consequence of the proposition above, as explicated in the \textit{Proof Strategy} paragraph.
\end{proof}

\begin{corollary}\label{MainCorollary}
    $\mathbf{MIN}$ is decidable, and  Theorem \ref{MILAx} provides an axiomatization for it.
\end{corollary}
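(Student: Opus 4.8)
The plan is to obtain both assertions as immediate transfers along the identity $\mathbf{MIN}=\mathbf{MIL}$ established in Theorem~\ref{maintheorem}. The first thing I would record is that this identity holds not merely at the level of validities but at the level of consequence relations: the inclusion $\mathbf{MIL}\subseteq\mathbf{MIN}$ of Theorem~\ref{Soundness} is already phrased for consequences, while the converse inclusion is witnessed by Proposition~\ref{pr:p-mMIN} together with the \emph{Proof Strategy} paragraph, which turns any $\mathbf{MIN}$-countermodel into an $\mathbf{MIL}$-countermodel for the same formula. Hence any sound-and-complete calculus, and any decision procedure, for one of the two logics is automatically correct for the other.

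For the axiomatization, I would invoke Theorem~\ref{MILAx}: the least normal modal logic over $\mathcal{L}$ (with $\Past\varphi:=\Min\varphi\top$ playing the role of the $\mathbf{S4}$-box, cf. the Remark on extensions of $\mathbf{S4}$) axiomatized by (Re.), (4), (Co.), and (Dk.) is sound and complete for $\mathbf{MIL}$. Composing this with $\mathbf{MIL}=\mathbf{MIN}$ yields that the very same calculus is sound and complete for $\mathbf{MIN}$. The only step needing a line of care is notational: Theorem~\ref{MILAx} writes the modality as `$\Sup$', so I would remark that, under the identification of the two modality symbols, these are literally the $\mathcal{L}$-formulas (Re.)--(Dk.) with `$\Min$' in place of `$\Sup$'; this is consistent with Theorem~\ref{Soundness}, which already verified their $\mathbf{MIN}$-soundness directly.

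For decidability, I would cite that \cite{SBKMIL} proves $\mathbf{MIL}$ decidable; since the set of $\mathbf{MIN}$-validities is literally the set of $\mathbf{MIL}$-validities by Theorem~\ref{maintheorem}, the same algorithm decides $\mathbf{MIN}$. (If a more self-contained phrasing is wanted, one can observe that the calculus of Theorem~\ref{MILAx} is finitely axiomatized, hence recursively enumerates $\mathbf{MIN}$, while \cite{SBKMIL} provides a recursive enumeration of the non-theorems via finite countermodels; a logic whose theorems and non-theorems are both recursively enumerable is decidable.)

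I do not expect a genuine obstacle here: the mathematical content is entirely carried by Theorem~\ref{maintheorem}, and what remains is bookkeeping — tracking whether the imported results from \cite{SBKMIL} are stated at the level of validity or of consequence, and the harmless renaming of the modality symbol. If anything, the subtlest point is simply confirming that the decidability statement borrowed from \cite{SBKMIL} concerns exactly the decision problem ``is $\varphi\in\mathbf{MIL}$?'', so that it transfers verbatim to $\mathbf{MIN}$.
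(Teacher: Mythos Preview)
Your approach is correct and essentially identical to the paper's one-line proof: invoke Theorem~\ref{maintheorem} ($\mathbf{MIN}=\mathbf{MIL}$) and then import the axiomatization and decidability of $\mathbf{MIL}$ from \cite{SBKMIL}. One small slip worth correcting: the construction behind Proposition~\ref{pr:p-mMIN} converts a $\Vdash_S$-countermodel into a $\Vdash_M$-countermodel (cf.\ the \emph{Proof Strategy} paragraph), not the reverse as you wrote; this is precisely what gives $\mathbf{MIN}\subseteq\mathbf{MIL}$, and your conclusion is unaffected.
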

\begin{proof}
    This follows from Theorem \ref{maintheorem} and the axiomatization and decidability proofs for $\mathbf{MIL}$ achieved in \cite{SBKMIL}.
\end{proof}

\section{Preorders and the Residual Implication}\label{sec:Extensions}
With our main results established, this final section investigates extensions by exploring the implications of considering preorders as our class of frames, rather than the subclass of partial orders, and by enriching the languages with the residual implications of $\Min$ and $\Sup$, respectively. We commence with the former.

\subsubsection{Preorder Frames}
    Thus far, our study has focused on $\mathbf{MIN}$ and $\mathbf{MIL}$ defined over posets. Making this explicit, we write
        $$\mathbf{MIN}_\textit{Pos}\mathrel{:=}\mathbf{MIN}, \qquad \mathbf{MIL}_\textit{Pos}\mathrel{:=}\mathbf{MIL}.$$
    However, the concepts of least and minimal upper bounds extend to general preorders, where they are known as \textit{quasi}-least and \textit{quasi}-minimal upper bounds. This motivates the study of the corresponding logics: 
        $$\mathbf{MIL}_\textit{Pre} \qquad\text{and} \qquad \mathbf{MIN}_\textit{Pre}.$$
    Indeed, in the literature, $\mathbf{MIL}_\textit{Pre}$ has been treated on a par with $\mathbf{MIL}_\textit{Pos}$, and \cite{SBKMIL} showed that, at the level of validities and consequences, they coincide: $\mathbf{MIL}_\textit{Pre}=\mathbf{MIL}_\textit{Pos}$. 

    Analogously, as a consequence of Theorem \ref{maintheorem} and Corollary \ref{MainCorollary}, we can show that in our minimal-upper-bound setting:
        $$\mathbf{MIN}_\textit{Pre}=\mathbf{MIN}_\textit{Pos}.$$
    One inclusion
        $$\mathbf{MIN}_\textit{Pre}\subseteq\mathbf{MIN}_\textit{Pos}$$
    follows because the latter logic is defined by restricting the class of frames of the former logic, while the other holds as the axiomatization of Theorem \ref{MILAx} is sound w.r.t. $\mathbf{MIN}_\textit{Pre}$.    
%\section{Extending with an informational implication}

\subsubsection{The Residual Implication} Another direction worth exploring involves enriching the language with the implication `\textbackslash' with semantics:
\begin{align*}
        &\mathbb{M}, u\Vdash \varphi\text{\textbackslash}\psi && \textbf{iff} && \text{for all $t,s\in P$, if } \mathbb{M}, t\Vdash \varphi \text{ and $s\in \mathrm{mub}\{t,u\}$,}\text{ then } \mathbb{M}, s\Vdash \psi.
\end{align*}
We denote the resulting modal logic of minimal upper bounds as $\mathbf{MIN}(\textbackslash)$. This type of semantic clause dates back to \cite{Urquhart72} and \cite{Urquhart73}, along with its informational interpretation: an information state $u$ satisfies $\varphi\text{\textbackslash}\psi$ iff
for all information states $t$ and all fusions $s\in \mathrm{mub}\{t,u\}$, whenever $t$ satisfies the antecedent $\varphi$, the fusion $s$ satisfies the consequent $\psi$; in other words, any way of adding the information $\varphi$ to $u$ results in $\psi$.

Note that `$\textbackslash$' is residuated by `$\Min$', standing to `$\Min$' as, for instance, the Lambek residuals stand to the Lambek product (hence the choice of the symbol `\textbackslash') and the relevant implication to the intensional conjunction. In the context of modal information logic, $\mathbf{MIN}(\textbackslash)$'s counterpart $\mathbf{MIL}(\textbackslash)$ was introduced in \cite{JvB2021} and studied in \cite{SBKMIL}, where it was proven decidable and axiomatized as follows:\footnote{Naturally, in $\mathbf{MIL}(\textbackslash)$, the semantic clause for `\textbackslash' refers to the supremum relation instead of the m.u.b. relation.}

\begin{theorem}[Axiomatization of $\mathbf{MIL}(\textbackslash)$ \cite{SBKMIL}]\label{def:axInfImp}
    $\mathbf{MIL}(\textbackslash)$ is sound and complete with respect to the least set of formulas $\mathbf{L}$ in the extended language with $\textbackslash$ satisfying the following conditions:

    \begin{enumerate}
        \item [(i)] $\mathbf{L}$ is closed under the axioms and rules of the axiomatization in Theorem \ref{MILAx}.
        \item [(ii)] $\mathbf{L}$ includes the $\mathbf{K}$-axioms for $\textbackslash$.
        \item [(iii)] $\mathbf{L}$ includes the axioms:
            \begin{enumerate}
                \item [(L1)] $\Sup p(p\textbackslash q)\to q$,
                \item [(L2)] $p\to q\textbackslash (\Sup pq)$.
            \end{enumerate}
        \item [(iv)] $\mathbf{L}$ is closed under the left-necessitation rule for $\textbackslash$, i.e.:
            \begin{enumerate}
                \item [($N$)] If $\varphi\in \mathbf{L}$, then $\psi \textbackslash \varphi \in \mathbf{L}$.
            \end{enumerate}
    \end{enumerate}
\end{theorem}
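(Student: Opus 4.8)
The plan is to prove the two inclusions separately: soundness, that every theorem of $\mathbf{L}$ is valid on all posets under the supremum semantics, and completeness, that every such validity is a theorem.

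Soundness I would dispatch by induction on $\mathbf{L}$-derivations. The axioms and rules carried over from Theorem \ref{MILAx} are already sound for $\Sup$, so only the clauses governing $\backslash$ need checking. The $\mathbf{K}$-axioms for $\backslash$ hold because its semantic clause is a universally quantified, box-like condition in the consequent, and the left-necessitation rule $(N)$ is sound because a globally valid $\varphi$ makes $\psi\backslash\varphi$ vacuously true at every point. The only substantive cases are the residuation axioms: (L1) encodes one half of the adjunction---if $s=\sup\{t,u\}$ with $t\Vdash p$ and $u\Vdash p\backslash q$, then the defining clause of $\backslash$ applied to $t,s$ forces $s\Vdash q$---and (L2) encodes the other half, using commutativity of suprema (the soundness of (Co.)) to pass from $u\Vdash p$ and $t\Vdash q$ with $s=\sup\{t,u\}=\sup\{u,t\}$ to $s\Vdash \Sup pq$.

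For completeness I would build a canonical model in the style familiar from relevance logic and the Lambek calculus, since $\backslash$ is precisely the left residual of the binary product modality $\Sup$. Take the worlds to be the maximal $\mathbf{L}$-consistent sets, recover the order $\leq$ from the derived modality $\Past\varphi:=\Sup\varphi\top$, and define the canonical ternary relation $R\Gamma\Delta\Theta$ by $\{\Sup\varphi\psi : \varphi\in\Delta,\ \psi\in\Theta\}\subseteq\Gamma$. The key is a pair of existence lemmas proved by Lindenbaum-style extension: one for $\Sup$ (if $\Sup\varphi\psi\in\Gamma$ there are $\Delta\ni\varphi$ and $\Theta\ni\psi$ with $R\Gamma\Delta\Theta$) and one for $\backslash$ (if $\varphi\backslash\psi\notin\Theta$ there are $\Gamma,\Delta$ with $R\Gamma\Delta\Theta$, $\varphi\in\Delta$ and $\psi\notin\Gamma$). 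Here the residuation axioms (L1) and (L2), together with the $\mathbf{K}$-apparatus for $\backslash$, are exactly what is needed to push the two modalities through the Lindenbaum argument coherently, yielding a truth lemma on this abstract frame.

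The crux---and the main obstacle---is that the abstract canonical relation $R$ is not literally the supremum relation of a poset: $R\Gamma\Delta\Theta$ only guarantees that $\Gamma$ is some upper bound built from $\Delta,\Theta$, not the least one. I would therefore re-run the representation construction by which \cite{SBKMIL} establishes completeness of $\mathbf{MIL}$ itself, transforming the abstract model into a genuine poset in which the modality is interpreted by actual suprema while preserving the truth of all modal formulas. The new difficulty, absent from the $\Sup$-only case, is that this transformation must additionally respect every $\backslash$-formula: since $\backslash$ quantifies universally over \emph{all} supremum-triples, each point or triple introduced to realise a genuine supremum creates fresh instances that the construction must be shown not to falsify. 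Controlling these new triples---guaranteeing that a realised supremum never witnesses a counterexample to a $\backslash$-formula true at a point---is the heart of the argument, and is where (L1) and (L2) must be invoked a second time, now at the level of the representation rather than the canonical model. An equivalent route is algebraic: the axioms present the Lindenbaum--Tarski algebra of $\mathbf{L}$ as a residuated structure, which one then represents as a poset equipped with its supremum operation, and the same obstacle reappears as the faithfulness of that representation for the residual.
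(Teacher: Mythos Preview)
The paper does not contain a proof of this theorem: it is quoted as a result of \cite{SBKMIL} (note the citation in the theorem header) and is used as a black box to deduce the subsequent results about $\mathbf{MIN}(\backslash)$. There is therefore no ``paper's own proof'' to compare your proposal against.

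That said, your outline is a sensible reconstruction of what one expects the cited proof to look like. Soundness is indeed routine, and your identification of the central difficulty for completeness---that the canonical ternary relation is not literally a supremum relation, so a representation step is required, and that this step must now also preserve the universally quantified $\backslash$-clause---is exactly the point at which the argument goes beyond the $\Sup$-only case. Your sketch does not, however, supply the mechanism that controls the new triples: you say (L1) and (L2) ``must be invoked a second time, now at the level of the representation,'' but you do not indicate \emph{how} they discharge the obligation that no freshly created supremum falsifies a $\backslash$-formula. That is precisely the nontrivial content of the result in \cite{SBKMIL}, and without it your proposal remains a strategy rather than a proof. Whether your sketch matches the actual argument in \cite{SBKMIL} (e.g., whether the representation there proceeds via a selection/unraveling construction, a step-by-step build, or an algebraic embedding) cannot be determined from the present paper.
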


With this axiomatization at hand, we can extend the proof of Theorem \ref{maintheorem} to the logics $\mathbf{MIN}(\textbackslash)$ and $\mathbf{MIL}(\textbackslash)$ in the augmented language.

\begin{theorem}
    $\mathbf{MIN}(\textbackslash)=\mathbf{MIL}(\textbackslash)$.
\end{theorem}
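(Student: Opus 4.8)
The plan is to mirror the proof of Theorem~\ref{maintheorem}. As there, the inclusion $\mathbf{MIL}(\textbackslash)\subseteq\mathbf{MIN}(\textbackslash)$ is the easy direction: one checks that the axiomatization of Theorem~\ref{def:axInfImp} is sound with respect to the m.u.b.\ semantics. The $\mathbf{K}$-axioms and left-necessitation for $\textbackslash$ are sound for the residual of \emph{any} binary relation, and (L1), (L2) are exactly the residuation laws linking $\textbackslash$ to $\Min$, which hold because $s\in\mathrm{mub}\{t,u\}$ plays in $\mathbf{MIN}(\textbackslash)$ the structural role that $s=\sup\{t,u\}$ plays in $\mathbf{MIL}(\textbackslash)$; combined with Theorem~\ref{Soundness} this gives the inclusion.

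For the converse $\mathbf{MIN}(\textbackslash)\subseteq\mathbf{MIL}(\textbackslash)$, I would re-run the \emph{Proof Strategy} paragraph in the enriched language: given $\mathbb{M}=(P,\leq,V)$ and $s\in P$ with $\mathbb{M},s\nVdash_S\psi$ for some $\psi\in\mathcal{L}(\textbackslash)$, apply Proposition~\ref{pr:p-mMIN} to obtain a poset $(P',\leq')$ satisfying clause~(2) together with the onto p-morphism $g_\omega:(P',\leq')\to(P,\leq)$. On a poset satisfying~(2), the relations $\mathrm{mub}\{\cdot,\cdot\}$ and $\sup\{\cdot,\cdot\}$ coincide, so the $\textbackslash$-clauses under the two semantics coincide as well, and hence $\Vdash_S$ and $\Vdash_M$ agree on all of $\mathcal{L}(\textbackslash)$ on $(P',\leq')$. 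It therefore remains to transport the refutation of $\psi$ along $g_\omega$, i.e.\ to check that $\Vdash_S$ is preserved under supremum p-morphisms \emph{for the full language with $\textbackslash$}. The Boolean and $\Min$-clauses are handled exactly as in the excerpt; for $\textbackslash$ one uses precisely the \textnormal{(forth)} and \textnormal{(back)} conditions of a supremum p-morphism --- \textnormal{(back)} to handle the universally quantified antecedent when pushing truth of $\varphi\textbackslash\psi$ downward along $g_\omega$, and \textnormal{(forth)} when pushing it upward --- so that a standard induction gives $\mathbb{M}',s'\Vdash_S\chi \iff \mathbb{M},g_\omega(s')\Vdash_S\chi$ for all $\chi\in\mathcal{L}(\textbackslash)$ and $s'\in P'$. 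Pulling back a refuting point of $\psi$ then yields $\mathbb{M}',s'\nVdash_S\psi$, hence $\mathbb{M}',s'\nVdash_M\psi$, establishing the inclusion.

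The only genuinely new work compared to Theorem~\ref{maintheorem} is the p-morphism invariance lemma for $\textbackslash$: one must verify that the definition of supremum p-morphism is already strong enough to preserve the residual modality, with no extra back-and-forth conditions needed. I expect this to be the main --- though still routine --- obstacle, since $\textbackslash$ is a co-modality (its truth condition is universal, not existential), so the roles of \textnormal{(forth)} and \textnormal{(back)} are interchanged relative to their use for $\Min$; one should write out the inductive step carefully to confirm that nothing further is required. Everything else --- Proposition~\ref{pr:p-mMIN}, the coincidence of $\mathrm{mub}$ and $\sup$ on models satisfying~(2), and the reduction to a single refuting model --- carries over verbatim. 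Finally, as with Corollary~\ref{MainCorollary}, this equality together with the results of \cite{SBKMIL} yields an axiomatization of $\mathbf{MIN}(\textbackslash)$ (namely that of Theorem~\ref{def:axInfImp}) and its decidability.
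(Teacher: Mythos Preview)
Your plan for the easy inclusion and for the overall architecture of the hard inclusion is fine, but the ``only genuinely new work'' paragraph contains a real gap: the ordinary supremum p-morphism conditions \textnormal{(forth)} and \textnormal{(back)} do \emph{not} suffice to preserve $\textbackslash$ under $\Vdash_S$. The problematic direction is showing $\mathbb{M}',u'\Vdash_S\varphi\textbackslash\psi\Rightarrow\mathbb{M},f(u')\Vdash_S\varphi\textbackslash\psi$. Given $t,s\in P$ with $\mathbb{M},t\Vdash_S\varphi$ and $s=\sup\{t,f(u')\}$, you need $t',s'\in P'$ with $f(t')=t$, $f(s')=s$ and $s'=\sup'\{t',u'\}$ --- crucially with the \emph{given} $u'$ fixed. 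The ordinary \textnormal{(back)} clause only lets you lift a supremum fact $f(x')=\sup\{y,z\}$ by producing \emph{some} preimages $y',z'$ of $y,z$; applied here it would yield $s'=\sup'\{t',u''\}$ with $f(u'')=f(u')$, but possibly $u''\neq u'$, so you cannot invoke the hypothesis at $u'$.

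What is actually needed is the stronger ``$\textbackslash$-back'' condition: if $x=\sup\{f(y'),z\}$, then there exist $x',z'\in P'$ with $f(x')=x$, $f(z')=z$ and $x'=\sup'\{y',z'\}$. This does not follow from the abstract definition of a supremum p-morphism; one has to go back into the concrete construction of the map $f$ in Lemma~\ref{lm:minDeletion} and verify it by a case analysis on the second coordinate of $y'=(y,j)$ (distinguishing $j=0$; $j=1$ with $x\in{\downarrow}s$; and $j=1$ with $x\notin{\downarrow}s$). Once that is checked, the iteration in Proposition~\ref{pr:p-mMIN} carries the property through to $g_\omega$, and the rest of your outline goes through. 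So the approach is right, but the claim that ``no extra back-and-forth conditions are needed'' is exactly where the argument would break; the paper's proof does precisely this extra verification.
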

\begin{proof}
    Once again, the inclusion $\mathbf{MIL}(\textbackslash)\subseteq \mathbf{MIN}(\textbackslash)$ follows by a routine check that the aforementioned axiomatization is sound w.r.t. the semantics of $\mathbf{MIN}(\textbackslash)$.
    
    For the converse inclusion $\mathbf{MIN}(\textbackslash)\subseteq \mathbf{MIL}(\textbackslash)$, examining the proofs of Theorem \ref{maintheorem}, Proposition \ref{pr:p-mMIN}, and Lemma \ref{lm:minDeletion}, it is readily seen that it suffices to show that the p-morphism $f$ of Lemma \ref{lm:minDeletion} additionally satisfies the back condition for $\textbackslash$:
    \begin{enumerate}[leftmargin=40pt]
        \item [($\text{\textbackslash}$-back)] if $x=\sup\{f(y'),z\}$, then there exist $\{x',z'\}\subseteq P'$ s.t. $f(x')=x, f(z')=z$ and $x'=\sup'\{y',z'\}$.
    \end{enumerate}
This is because the forth condition for $\textbackslash$ coincides with that for $\Sup$; thus, if $f$ additionally satisfies the back condition for $\textbackslash$, it preserves $\Vdash_S$ even in the extended language, as can be easily verified.
    
    Accordingly, suppose $x=\sup\{f(y,j),z\}$ for some $x,z\in P$. Going by cases, we get:
\begin{itemize}
    \item If $j=0$, we have, by 4. in Lemma \ref{lm:minDeletion}, that $(x,0)=\sup'\{(y,0), (z,0)\}$ -- showing the desired.
    \item If $j=1$ and $x\in {\downarrow}s$, then also $z \in{\downarrow}s$. Thus, $\{(x,1), (z,1)\}\subseteq P'$, and $(x,1)$ is an upper bound of $\{(y,1), (z,1)\}$. So suppose $(v,i)$ also is an upper bound of $\{(y,1), (z,1)\}$. It then suffices to show that $(x,1)\leq' (v,i)$. Since $f$ is order-preserving, we have that $y\leq v\geq z$, so by assumption $x\leq v$, hence $(x,1)\leq' (v,i)$.
    
    \item If $j=1$ and $x\notin {\downarrow}s$, then we claim that $(x,0)=\sup'\{(y,1), (z,0)\}$. Since $x=\sup\{f(y,j),z\}$ by assumption, $(x,0)$ is an upper bound of $\{(y,1), (z,0)\}$. Assume $(v,i)$ also is an upper bound of $\{(y,1), (z,0)\}$. Then $y\leq v\geq z$, so $x\leq v$. Thus, $v \notin {\downarrow}s$, hence we must have $i=0$, whence $(x,0)\leq' (v,i)$, as required.
\end{itemize}
    Thus, we have demonstrated that the function $f$ defined in Lemma \ref{lm:minDeletion} satisfies the $\textbackslash$-back condition, allowing us to conclude that Lemma \ref{lm:minDeletion}, Proposition \ref{pr:p-mMIN}, and Theorem \ref{maintheorem} all extend to our present richer setting.
\end{proof}

Likewise, the analogue of corollary \ref{MainCorollary} follows:

\begin{corollary}
    $\mathbf{MIN}(\textbackslash)$ is decidable, and Theorem \ref{def:axInfImp} provides an axiomatization for it.
\end{corollary}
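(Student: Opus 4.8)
The plan is to reduce this corollary to the main equality theorem $\mathbf{MIN}(\textbackslash)=\mathbf{MIL}(\textbackslash)$ together with the known results for $\mathbf{MIL}(\textbackslash)$ from \cite{SBKMIL}, exactly mirroring the proof of Corollary~\ref{MainCorollary}. First I would invoke the theorem just proven, $\mathbf{MIN}(\textbackslash)=\mathbf{MIL}(\textbackslash)$, so that any decision procedure for the latter is automatically one for the former, and any sound and complete axiomatization of the latter is automatically one for the former. Then I would cite Theorem~\ref{def:axInfImp}, which records that \cite{SBKMIL} gave a sound and complete axiomatization of $\mathbf{MIL}(\textbackslash)$, and the accompanying fact from \cite{SBKMIL} that $\mathbf{MIL}(\textbackslash)$ is decidable.

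\begin{proof}
    This is immediate from the preceding theorem, which gives $\mathbf{MIN}(\textbackslash)=\mathbf{MIL}(\textbackslash)$, together with the axiomatization (Theorem~\ref{def:axInfImp}) and decidability of $\mathbf{MIL}(\textbackslash)$ established in \cite{SBKMIL}. Indeed, since the two logics coincide as sets of validities (equivalently, consequence relations), the axiomatization of Theorem~\ref{def:axInfImp} is sound and complete for $\mathbf{MIN}(\textbackslash)$, and any decision procedure for membership in $\mathbf{MIL}(\textbackslash)$ decides membership in $\mathbf{MIN}(\textbackslash)$.
\end{proof}

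There is essentially no obstacle here: the corollary is a bookkeeping consequence, and all the real work has already been done — in the extension of Lemma~\ref{lm:minDeletion} to verify the $\textbackslash$-back condition (which is where the genuine content lay, the slightly delicate case being $j=1$ with $x\notin{\downarrow}s$), and in \cite{SBKMIL}'s treatment of $\mathbf{MIL}(\textbackslash)$. The only thing to be careful about is that ``decidable'' and ``axiomatized'' are properties transported along the identity of two logics, which is unproblematic once $\mathbf{MIN}(\textbackslash)=\mathbf{MIL}(\textbackslash)$ is in hand. If one wanted to be maximally explicit one could note that, as with the modality-only case, the equality of the logics means a formula $\varphi$ is $\mathbf{MIN}(\textbackslash)$-valid iff it is derivable in the calculus of Theorem~\ref{def:axInfImp}, and that $\{\varphi : \mathbf{MIL}(\textbackslash)\vDash\varphi\}$ being recursive makes $\{\varphi : \mathbf{MIN}(\textbackslash)\vDash\varphi\}$ recursive; but for a corollary of this kind the one-line citation-based argument above suffices.
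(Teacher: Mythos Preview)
Your proposal is correct and matches the paper's approach exactly: the paper states this corollary without an explicit proof, merely noting that ``the analogue of Corollary~\ref{MainCorollary} follows,'' i.e., it is immediate from $\mathbf{MIN}(\textbackslash)=\mathbf{MIL}(\textbackslash)$ together with the axiomatization and decidability of $\mathbf{MIL}(\textbackslash)$ from \cite{SBKMIL}. Your write-up is if anything more explicit than the paper's.
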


\subsubsection{Summary} In conclusion, we have proven that $\mathbf{MIN}=\mathbf{MIL}$, 
and even that
    $$\mathbf{MIN}\mathrel{:=}\mathbf{MIN}_\textit{Pos}=\mathbf{MIN}_\textit{Pre}=\mathbf{MIL}_\textit{Pre}=\mathbf{MIL}_\textit{Pos}\mathrel{=:}\mathbf{MIL}$$
Likewise, our proof that $\mathbf{MIN}(\textbackslash)=\mathbf{MIL}(\textbackslash)$ is readily extended to yield: 
    $$\mathbf{MIN}(\textbackslash)\mathrel{:=}\mathbf{MIN}_\textit{Pos}(\textbackslash)=\mathbf{MIN}_\textit{Pre}(\textbackslash)=\mathbf{MIL}_\textit{Pre}(\textbackslash)=\mathbf{MIL}_\textit{Pos}(\textbackslash)\mathrel{=:}\mathbf{MIL}(\textbackslash)$$
In broader terms, one might therefore conclude that the landscape of modal logics of minimal and least upper bounds is uniform.

However, even if the least and minimal-upper-bound interpretations are hard to tell apart from a modal perspective, our central proof method for Theorem \ref{maintheorem}, relying on the construction of infinite chains, suggests a notable context where they  do differ: on \textit{finite} preorders/posets. This is witnessed by the formula
        $$(\Past p \land \Past q)\to \Past(\Min pq),$$
which is valid on finite frames under the minimal-upper-bound interpretation but not under the least-upper-bound interpretation.

% Finally, an avenue for further study worth mentioning is considering a logic with both 

\begin{credits}
\subsubsection{\ackname} This paper is based on Chapter 5 of my master’s thesis \cite{thesis}, supervised
by Johan van Benthem and Nick Bezhanishvili. I am grateful to both for their guidance throughout the thesis process. I acknowledge partial
support from Nothing is Logical (NihiL), an NWO OC project (406.21.CTW.023).

\subsubsection{\discintname}
The author(s) have no competing interests to declare.
\end{credits}

\bibliographystyle{splncs04} 
\bibliography{mybibliography}

\end{document}